\newcommand{\calO}{{\mathcal O}}
\DeclareMathOperator*{\res}{res}
\DeclareMathOperator{\Cl}{Cl}
\DeclareMathOperator{\Nm}{Nm}
\DeclareMathOperator{\ord}{ord}
\DeclareMathOperator{\Reg}{Reg}
\DeclareMathOperator{\hnum}{h}
\def \magma {Magma}
\def \Magma {\magma}
\newcommand{\abs}[1]{\lvert #1 \rvert}
\newcommand{\defi}{\textsf}
\newcommand{\bbQ}{{\mathbbm Q}}
\newcommand{\bbZ}{{\mathbbm Z}}
\newcommand{\Q}{{\bbQ}}
\newcommand{\fraka}{{\mathfrak a}}
\newcommand{\frakp}{{\mathfrak p}}
\DeclareMathOperator*{\disc}{disc}
\renewcommand{\pmod}[1]{\nobreak\ifinner\mkern8mu\else\mkern8mu\fi
(\textup{mod}\,\,#1)}
\numberwithin{equation}{section}
\theoremstyle{plain}
\newtheorem{thm}[equation]{Theorem}
\theoremstyle{remark}
\newtheorem{rmk}[equation]{Remark}
\newtheorem{ques}[equation]{Question}
\newtheorem{example}[equation]{Example}
\theoremstyle{definition}
\begin{document}

\title[Magma class groups and unit groups]{Computing class groups and unit groups \\ in Magma}

\author{Andreas-Stephan Elsenhans}

\address{School of Mathematics and Statistics\\ University of Sydney, NSW, 2006\\ Australia}
\address{Institut f\"ur Mathematik\\ Universit\"at W\"urzburg\\ Emil-Fischer-Stra\ss e 30\\ D-97074 W\"urzburg\\ German
y}
\email{stephan.elsenhans@mathematik.uni-wuerzburg.de}
\urladdr{https://www.mathematik.uni-wuerzburg.de/institut/personal/elsenhans.html}

\author{John Voight}

\address{School of Mathematics and Statistics\\ University of Sydney, NSW, 2006\\ Australia}
\email{jvoight@gmail.com}
\urladdr{https://jvoight.github.io/}

\keywords{Class groups, unit groups}

\begin{abstract}
We describe the computation of class groups and unit groups of number fields as implemented in Magma (V2.29).  After quickly reviewing the main algorithms based on factor bases, relation collection, and analytic class number evaluation, we distinguish their behavior across formalizable, rigorous, GRH-conditional, and heuristic regimes. 
\end{abstract}

\maketitle

\theoremstyle{definition}
\newtheorem*{ttt}{}

\section{Introduction}

\subsection*{A bit of motivation}

In computational algebraic number theory, determining the class group and unit group of the ring of integers of a number field is a fundamental algorithmic task. Their computation remains of significant theoretical interest---many central questions in number theory concern properties of these groups, including their distribution---and they are indispensible in practice, underpinning explicit methods across arithmetic geometry.  We might even consider these algorithms as one key benchmark for a computational algebra system used in number theory.

\subsection*{Takeaways}

We consider five possible regimes of computing. 

\begin{itemize}
\item \emph{Formalizable}: rigorously proven in a manner that could be verified by a formalized proof assistant---in particular, unconditional.  If numerical calculations are performed, they must use ball arithmetic.    
\item \emph{Rigorous}: proven, but we allow numerical calculations if they include floating-point error analysis (but can ignore any possible accumulated round off errors).  
\item \emph{GRH-conditional formalizable} and \emph{GRH-conditional rigorous}: same but we assume the Generalized Riemann Hypothesis (GRH).
\item \emph{Heuristic}: good heuristics may be assumed, so it is very likely to be the correct answer, but it should not be considered proven.  In particular, we allow good, reliable, nonrigorous numerical computations. 
\end{itemize}

One step beyond formalizable would be \emph{formalized}, where steps along the way are output with the intention of entering them into a formalized proof assistant.  

In this article, we document class group and unit group algorithms in Magma \cite{Magma}.  With the above in mind, the takeaways are as follows.

\begin{enumerate}
\item By default (for example, calling \texttt{ClassGroup} with no optional arguments), the class group algorithm in Magma is formalizable. A ``standard'' approach is taken: generators are taken up to the Minkowski bound, relations are found, and then the class group is confirmed to be saturated.  The latter steps are done using exact computations. 

The bad news: a bug was uncovered (in fact, it has been a problem for a some time!)\ which turned off saturation of the class group. So the results returned were not formalizable. This has been fixed as of V2.29; and, for what it is worth, by the way that the relations are computed it is quite unlikely that a wrong answer was returned.  

\item By default, the unit group algorithm in Magma is rigorous.  A lower bound of the regulator is computed and the class group is $p$-saturated with respect to all primes $p$ smaller than the quotient of the regulator and the regulator bound. As all regulator computations use floating point numbers, this is not formalizable; but it could be made so, with ball arithmetic.

\item When the GRH flag is added, in V2.28 (indeed going back many versions), a non-rigorous evaluation of the Euler product was used and so the class group and unit group were not rigorous or formalizable. In V2.29, this bug is fixed.

Calling {\tt ClassGroup} with proof level {\tt GRH} will result in a GRH-conditional rigorous computation. Further, the proof level {\tt Subgroup} in combination with a GRH-bound for the generators of the class group will result in a GRH-conditional formalizable algorithm,
as the check for saturation is performed.

Calling {\tt UnitGroup} with the additional parameter {\tt GRH} will skip the saturation and therefore result in a GRH-conditional rigorous computation. Without that parameter, the saturation check is done as well and the result will be rigorous.

\item As it turns out, the bit of good news is that a non-rigorous Euler product evaluation turns out to be extremely close and very useful as a heuristic---so much so that again it seems extremely unlikely that a wrong answer was ever returned.  We discuss at the end some possible heuristic algorithms that follow up on this observation.
\end{enumerate}

\subsection*{Contents}

This article gives a detailed account of how class groups and unit groups of number fields are computed in Magma V2.29, peppered with observations and questions throughout.  We start in \Cref{sec:history} with a brief history.  \Cref{sec:basic} recalls the basic definitions and fundamental algorithmic problems, highlighting issues such as representation of number fields, maximal orders, and the Picard group. In \cref{sec:factorbase}, we review the core factor base method, describing how generators and relations are obtained under various assumptions, how to decide when enough relations have been found, and how this connects to heuristics and conjectures.  Then we turn to analytic techniques in \cref{sec:analytic}, detailing both rigorous GRH-conditional estimates and heuristic Euler product evaluations of the Dedekind zeta residue, with discussion of error analysis and stopping criteria.  \Cref{sec:unitgroup} focuses on unit group algorithms, including classical methods for quadratic fields, strategies for detecting and saturating missing units, and the use of discrete logarithms modulo primes.  Then in \cref{sec:finishing} we explain how the class group is saturated and verified, describing the detection of missing generators or relations and the structure of the saturation algorithm.  We conclude in \cref{sec:conclude} with a summary with benchmarks, along with observations on typical error patterns and some possible future directions.

\subsection*{Acknowledgements}

We would like to thank Eran Assaf, Edgar Costa, Tim Dokchitser, Claus Fieker, Nicole Sutherland, Allan Steel, and Alain Chavarri Villarello for helpful conversations.  Thanks also to the participants in the \emph{Magma Meeting: Rational Points 2025} for their input including some suggestions for future work in the final section.

\section{A bit of history} \label{sec:history}




Many individuals have contributed to the current package in Magma.  We would appreciate hearing of any corrections or additions to our attempt to reconstruct the history here.  Since they often go together, we lump together code to compute the class group and unit group together, referring only to the former.

The class group code in Magma originates from code taken from the KANT package in 1994. This code was written in the early 1990s by Johannes Graf von Schmettow, then rewritten chiefly by Klaus Wildanger and Florian Hess in 1996 and then updated in Magma.

In the mid 1990s, code from Pari was installed and modified by Alexandra Flynn for class groups of quadratic fields. Although it was heavily used and supported during that time, it is no longer called (except through code for binary quadratic forms). 

Claus Fieker built on the class group package by providing explicit algorithmic class field theory during the period 2000--2011~\cite{F01}.

J\"urgen Kl\"uners and Sebastian Pauli developed algorithms for computing the Picard group of non-maximal orders and for embedding the unit group of non-maximal orders into the unit group of the field in the mid 2000s. Also during this period, Mark Watkins implemented the fast algorithm of Wieb Bosma and Peter Stevenhagen for computing the $2$-part of the ideal class group of a quadratic field. 

In 2011, Jean-Fran\c{c}ois Biasse implemented a quadratic sieve for computing the class group of a quadratic field in collaboration with Steve Donnelly and Nicole Sutherland. He also developed a generalisation of the sieve for number fields of degree greater than 2; it is reported to perform well for degrees up to $5$ and to some extent in degree $6$. 

In the period 2012--2014, Steve Donnelly provided a new implementation of the general class group algorithm using a random walk on ideals with heavy use of LLL on ideal bases to find smooth relations---this is generally much faster than the old KANT code. Allan Steel also made significant improvements in this code. 

Finally, Allan Steel implemented a distributed parallel version of the class group code in 2022.

\section{Basic algorithms} \label{sec:basic}

We refer to the \Magma\ handbook and the article by Claus Fieker~\cite{F06} for an overview. For general references, there is also an article by Lenstra~\cite{L92} and the books of Cohen~\cite{Cohen,Cohen2}. Many of the results were originally obtained by Zassenhaus, Post, Cohen, Buchmann 
and many others, but this is not the right place to attempt to give detailed references.

\subsection*{Setup}

In \Magma, a number field $K=\Q(\alpha)$ is represented by the minimal polynomial $f(x) \in \Q[x]$ of a primitive element $\alpha$. (In fact, it is represented in KANT by an equation order, which is why there are sometimes issues when the polynomial does not have integer coefficients.) 

\begin{ques} 
We consider here only the absolute case.  However, much of what is done can be made relative.  This could be important if considering many extensions $K$ of a given field $F$ (to avoid recomputing data coming from the base field). 

For example, what speed improvements could we hope for in the case of CM extensions $K$ of a totally real field $F$ (analogous to the case of imaginary quadratic fields)?  As the rank of the unit group of a CM extension coincides with the unit rank over the totally real field, a specialized unit group algorithm may be
helpful.
\end{ques}

\begin{rmk}
We could also work more broadly with \'etale algebras; this is explained and implemented in Magma by Stefano Marseglia. However, these algebras project onto their number field factors and a basic primitive used in these algorithms is the number field case. 
\end{rmk}

\begin{ques}
Do we gain any efficiency in working with number fields embedded in higher codimension? For example, $K=\Q[x,y]/(f(x,y),g(x,y))$? If a lot of time is spent mapping from a power basis, then perhaps. If everything is computed relative to a small (LLL-reduced?)\ integral basis, then this should be independent of the way that the algebra is represented. Perhaps we should lead with this way of presenting arithmetic in number fields---sure, the input is a minimal polynomial but most of the action is in finding a good basis for orders?
\end{ques}

We suppose that a maximal order has been computed. 

\begin{ques}
The assumption that the order is maximal is unnecessary and could be quite expensive to prove. The algorithms work perfectly well without this assumption, working instead with the Picard group and when something goes wrong, we typically have produced a superorder.  Is this difficult to implement? 
\end{ques}

\subsection*{Fundamental algorithms}

We write $\Cl \calO_K$ for the class group of $\calO_K$, and write $\hnum(\calO_K)$ for its order. Let $r_1,r_2$ be the number of real, complex places of $K$ and let $d \colonequals \disc(\calO_K)$ be the discriminant. 

The class group is a finite abelian group by the geometry of numbers, and the unit group is a finitely generated abelian group by Dirichlet's unit theorem, of rank $r_1+r_2-1$. Applying logarithms to the absolute values under the set of $r$ archimedean places of $K$ maps the unit group to a lattice of rank $r - 1$. The kernel of this map are the roots of unity in $\calO_K^\times$. The covolume of this lattice is the regulator.

Given as input the maximal order $\calO_K$, there are two fundamental (and related) problems. 
\begin{enumerate}
\item Class group: give as output a computable isomorphism between an abstract finite abelian group and the class group $\Cl \calO_K$ (as a group of classes of ideals)---so one can evaluate both the map (give an ideal which represents the given class) and its inverse (given an ideal, find its class). 
\item Unit group: give as output a computable isomorphism between an abstract finitely generated abelian group and the unit group $\calO_K^\times$. 
\end{enumerate}

We allow the output of the unit group to be in a compact representation, as a product of elements with exponents. Calling {\tt UnitGroup} with the optional parameter {\tt Raw} gives the user
access to this representation.

We can easily get roots of unity in the field, so we ignore this throughout. One way to obatain them is a search for elements of small $T_2$-norm, which is part of the saturation. But, usually they are obtained during the unit group computation.

The example 
\begin{verbatim}
> _<x> := PolynomialRing(Rationals());
> K := NumberField(x^2 + 890232348011);
> ord := MaximalOrder(K);
> time c1, c2 := ClassGroup(ord : Proof := "GRH");
Time: 0.070
> 1.0 * Norm(c2(30000*c1.1));
6.02233400649697968550171564231E172002
\end{verbatim}
shows that the class group map can result in huge representatives, if the class group is a large cyclic group. 

\section{The factor base method: an overview} \label{sec:factorbase}

\subsection*{Basic idea}

What better way to compute a finitely generated abelian group than with generators and relations!  For the class group, we take a set of classes of prime ideals; we call this the \defi{factor base}. We then find relations, meaning elements $\alpha \in K^\times$ which factor completely over the factor base. We can combine relations until they have the trivial factorization, which gives a unit. 

Written out, let $S \colonequals \{\frakp_1,\dots,\frakp_m\}$ be a set of primes. Let
\[ \calO_{K,S}^\times \colonequals \{\alpha \in K^\times : \textup{$\ord_\frakp(\alpha) = 0$ if $\frakp \not\in S$} \} \]
be the subgroup of elements which factor over the primes in $S$, i.e., the $S$-unit group. Then there is an exact sequence
\begin{equation}
1 \to \calO_K^\times \to \calO_{K,S}^\times \xrightarrow{\phi} \bbZ^m \xrightarrow{\pi} \Cl \calO_K
\end{equation}
where the first map is the natural inclusion, the map $\phi$ is factorization
\begin{equation}
\begin{aligned}
\phi \colon \calO_{K,S}^\times &\to \bbZ^m \\
\alpha &\mapsto (\ord_{\frakp_i}(\alpha))_{i=1,\dots,m},
\end{aligned}
\end{equation}
and $\pi$ is taking the class 
\begin{equation}
\begin{aligned}
\pi \colon \bbZ^m &\rightarrow \Cl \calO_K \\
(e_1,\ldots,e_m) &\mapsto \prod_{i}^m [\frakp_i^{e_i}].
\end{aligned}
\end{equation}

The set $S$ generates the class group if and only if $\pi$ is surjective; in that case, if we can compute $\ker \pi$, we know the class group. So we search for enough elements in $\calO_{K,S}^\times$ to map to a generating system of $\ker \pi \leq \bbZ^m$. As a byproduct, we can find candidate generators for the unit group from $\ker \phi$. 

So over the next few subsections we will discuss generators, finding relations, and then how we know when we have found all relations. We break these up into the various regimes explained above. 

\subsection*{Formalizable generators}

The theorem of Minkowski provides that every ideal class of $\calO_K$ has an integral representative $\fraka$ with
\begin{equation} \label{eqn:nm}
\Nm(\fraka) \leq \frac{n!}{n^n} \left(\frac{4}{\pi}\right)^{r_2} \sqrt{\abs{d}}
\end{equation}
where $\Nm$ denotes the absolute norm. (This is one way to prove that the class group is finite, but it is not necessarily an efficient way to represent elements of the class group.)

So for something formalizable, to be sure we have generators we take all prime ideals up to the Minkowski bound. 

\begin{rmk}
Sometimes the Minkowski bound is shortened to $O(\sqrt{|d|})$. But, this is only literally true when the degree is fixed, as it does not take the leading coefficient into account.  Indeed, Stirling's approximation (which is quite good!)\ gives 
\[ \frac{n!}{n^n} \sim \frac{\sqrt{2\pi n}}{e^n} \approx 2.50 \sqrt{n} e^{-n} \]
which decays exponentially in $n$. Already for $n=10$, the constant is only $0.0012$. 

Although it is not meaningful to study the inequality for fixed discriminant, we might study number fields of small discriminant relative to the GRH lower bound, which is of the form 
\[ \sqrt{\abs{d}} \underset{\sim}{>} (60.84)^{r_1} (22.38)^{2r_2} \]
which suggests that a lower estimate for the Minkowski 
bound of the shape
\begin{equation*}  
 2.50 \sqrt{n} (22.38)^{r_1} (9.29)^{2r_2} 
\end{equation*}
is possible.
This expresses the Minkowski bound for number fields of large degree $n$ whose discriminant is small relative to the degree: it at least indicates that the bound is larger when there are more complex embeddings, which makes sense.
\end{rmk}

The Zimmert bounds improved the Minkowski constants; there is an elegant reformulation due to Oseterl\'e. The best bound is that for every class $C$, there is an integral ideal $\fraka$ either in $C$ or $\mathfrak{d} C$ where $\mathfrak{d}$ denotes the different of $\calO_K$ such that
\begin{equation} \label{eqn:zimmert}
\Nm(\fraka) \leq (0.128-o(1))^{r_1} (0.044-o(1))^{r_2} \sqrt{\abs{d}} 
\end{equation}
and
\begin{equation}
\Nm(\fraka) \leq (0.14-o(1))^{r_1} (0.051-o(1))^{r_2} \sqrt{\abs{d}} 
\end{equation}
for all ideal classes. Note how much better this is than Minkowski, which gives roughly
\begin{equation}
\Nm(\fraka) \underset{\sim}{<} 2.50 \sqrt{n} (0.36)^{r_1} (0.46)^{r_2} \sqrt{\abs{d}}. 
\end{equation}

It is of course no trouble to include the class of the different (for example, it is trivial for monogenic orders).

\begin{rmk}
The constants in Zimmert's inequality are known. Therefore, we have  {\tt ZimmertBound} in \Magma. But, the improvement is not as big as one might hope for, as the following example shows:
\begin{verbatim}
> _<x> := PolynomialRing(Rationals());
> f := &*[x-i : i in [-6..6]] + 1;
> OK := MaximalOrder(f);
> 1.0* Discriminant(OK);
1.62088452500733828715108860464E88
> MinkowskiBound(OK);
2617536668803912827212778710271533052902
> ZimmertBound(OK);
9496537377795252212801557901238143503
> 1.0 * $2 / $1;
275.630639323783598832733968433
\end{verbatim}
A constant factor, but in either case it is so large it is practically useless.
\end{rmk}

\begin{rmk}
There is no known unconditional bound for the norms of a set of \emph{generators} for the class group. The Minkowski bound allows us to represent every element of the class group this way, which is overkill.
\end{rmk}

\subsection*{Conditional generators}

Conditional on the GRH, we can improve on Minkowski as follows.

\begin{thm}[Bach] 
Assuming GRH, the set
$$
\{ \frakp \subset \calO_K : \Nm(\frakp) \leq 12 (\log \abs{d})^2 \} 
$$
generates $\Cl \calO_K$.
\end{thm}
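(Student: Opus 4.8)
The plan is to run the standard effective-Chebotarev-under-GRH argument (going back to Bach, and to Lagarias--Odlyzko for the density version): reduce the statement to a nonvanishing property of a twisted prime sum, then contradict it by a contour shift, with the explicit constant falling out of an optimization. \emph{First}, set $X \colonequals 12(\log\abs{d})^2$ and let $H_0\leq\Cl\calO_K$ be the subgroup generated by the classes $[\frakp]$ with $\Nm(\frakp)\leq X$, and suppose $H_0\neq\Cl\calO_K$. Then $\Cl\calO_K/H_0$ admits a nontrivial character, which we pull back to a nontrivial character $\chi\colon\Cl\calO_K\to\bbC^\times$ that is trivial on $[\frakp]$ whenever $\Nm(\frakp)\leq X$. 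Since $\chi$ is a character of the (wide) class group it is everywhere unramified, so the Hecke $L$-function $L(s,\chi)$ is entire (as $\chi\neq 1$), nonzero at $s=1$, and has the same conductor $\abs{d}$ and archimedean gamma factor as $\zeta_K$; under GRH it has no zeros with $\Re(s)>\tfrac12$. Finally, if $\Nm(\mathfrak{n})\leq X$ then every prime $\frakp\mid\mathfrak{n}$ satisfies $\Nm(\frakp)\leq X$, hence $\chi(\frakp)=1$ and so $\chi(\mathfrak{n})=1$; therefore $\sum_{\mathfrak{n}}\Lambda_K(\mathfrak{n})\bigl(1-\chi(\mathfrak{n})\bigr)w(\Nm(\mathfrak{n}))=0$ for any weight $w$ supported on $[0,X]$.

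\emph{Next}, fix a smooth weight $w$ supported on $[0,X]$, identically $1$ on a slightly smaller interval, and let $\widetilde w$ be its Mellin transform. By Perron's formula the vanishing above becomes
\[
0 = \frac{1}{2\pi i}\int_{(2)}\Bigl(-\frac{\zeta_K'}{\zeta_K}(s)+\frac{L'}{L}(s,\chi)\Bigr)\widetilde w(s)\,ds .
\]
Shift the contour to $\Re(s)=\tfrac12+\varepsilon$: the only pole crossed is the simple pole of $-\zeta_K'/\zeta_K$ at $s=1$, with residue $\widetilde w(1)=\int_0^\infty w(t)\,dt\asymp X$ (the summand $L'/L(s,\chi)$ is holomorphic at $s=1$), and under GRH neither $\zeta_K$ nor $L(s,\chi)$ vanishes in the strip $\tfrac12<\Re(s)\leq 2$, so there are no further poles. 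Hence $\widetilde w(1)$ equals minus the integral along $\Re(s)=\tfrac12+\varepsilon$.

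\emph{Finally}, estimate that integral. On $\Re(s)=\tfrac12+\varepsilon$, GRH yields explicit bounds $\abs{\zeta_K'/\zeta_K(s)},\ \abs{L'/L(s,\chi)}\ll_\varepsilon \log\abs{d}+n\log(2+\abs{t})$, while repeated integration by parts gives $\abs{\widetilde w(\tfrac12+\varepsilon+it)}\ll X^{1/2+\varepsilon}(1+\abs{t})^{-2}$. Multiplying and integrating produces a bound of the form $X^{1/2+\varepsilon}\cdot O_\varepsilon(\log\abs{d}+n)$, which is $X^{1/2+\varepsilon}\cdot O_\varepsilon(\log\abs{d})$ since every number field satisfies $n=O(\log\abs{d})$. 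For $X\geq 12(\log\abs{d})^2$, a suitable $\varepsilon$, and an optimized weight $w$, this is strictly smaller than $\widetilde w(1)\asymp X$, contradicting the identity of the previous step; hence $H_0=\Cl\calO_K$. The conceptual content is concentrated in this last step: the framework is routine, but obtaining the explicit constant $12$ (which is not sharp---later work has improved it) requires fully effective bounds for $\zeta_K'/\zeta_K$ and $L'/L$ on the critical line under GRH, careful control of the decay of $\widetilde w$ (a sharper cutoff near $X$ helps the main term but worsens the tail), and a genuine optimization over the test function and parameters.
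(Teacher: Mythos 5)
The paper's ``proof'' of this theorem is a single line: it is quoted directly from Bach's paper (Theorem 4 of \cite{Bach}), with no argument given. What you have written is, in effect, a pr\'ecis of how Bach proves that theorem: pass to a nontrivial character of the quotient by the subgroup generated by small primes, note that the resulting unramified Hecke $L$-function has conductor $\abs{d}$ and the same gamma factor as $\zeta_K$, form a weighted sum of $\Lambda_K(\mathfrak{n})(1-\chi(\mathfrak{n}))$ that vanishes identically, and contradict that vanishing by a contour shift past the pole at $s=1$, using GRH to control the shifted integral. The reductions are all correct: the pullback of a character of $\Cl\calO_K/H_0$ is trivial on all prime powers of norm at most $X$, the only pole crossed is at $s=1$ with residue $\widetilde w(1)$, and $n=O(\log\abs{d})$ by Minkowski, so the error term really is $X^{1/2+\varepsilon}\cdot O_\varepsilon(\log\abs{d})$ against a main term of size $X$. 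So methodologically you are on exactly the track of the cited source; you have simply inlined the citation.

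The genuine gap is the one you yourself flag: nothing in the write-up produces the constant $12$. The qualitative statement---that primes of norm $O((\log\abs{d})^2)$ generate under GRH---follows from your soft argument with \emph{some} unspecified constant, but the theorem as stated is a fully explicit inequality, and the number $12$ is its entire content. Getting it requires explicit (not $\ll_\varepsilon$) GRH bounds for $\zeta_K'/\zeta_K$ and $L'/L$ near the critical line with all dependence on $\abs{d}$, $n$, $r_1$, $r_2$ tracked, a concrete choice of kernel (Bach uses specific weights and tabulates the resulting constants), and care with small $\abs{d}$ where $\log\abs{d}$ is of order $1$ and the ``lower-order'' terms are not lower order. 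As written, the final sentence ``this is strictly smaller than $\widetilde w(1)$'' is an assertion, not a deduction. For the purposes of this paper the honest move is the one the paper makes---cite \cite[Theorem 4]{Bach}---and if you want a self-contained proof you would need to carry out the optimization rather than gesture at it.
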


\begin{proof}
This is a direct consequence of \cite[Theorem 4]{Bach}.
\end{proof}

Belabas--Diaz y Diaz--Friedman~\cite{MR2373197} gave another GRH bound.   In practice, one can use the minimum of the two.  This is implemented in \Magma\ via {\tt GRHBound} and is used by the class group algorithm.

\subsection*{Heuristic generators}

The Cohen--Lenstra--Martinet heuristics predict that the class group is ``close to cyclic''. More precisely, if $G$ is the Galois group of $K$ then the prime-to-($\#\mu_K\#G$) part of the class group has a matrix model in generators and relations like the above (with the size of matrices tending to infinity). For imaginary quadratic fields, the heuristic says that the odd part is cyclic with probability $97.7\%$. The odds only go up when considering higher degree fields, because they are modelled by a further quotient by units. 

Using the Chebotarev theorem as a heuristic, we expect prime ideals to land randomly as elements in this cyclic group, so again the probability is very high that one small ideal, surely a few, will generate.  
From this perspective, we do not need to take all primes up to a given bound. But then we still have to find relations, a topic we turn to next.

\begin{ques}
We also still have to worry about $\ell$-Sylow subgroups with $\ell \mid \#\mu_K\#G$.  For the $2$-part of quadratic fields, we have genus theory available, and recent heuristics are theorems in this case.  What does a generalization to arbitrary degree say heuristically?  
\end{ques}

\subsection*{Finding relations} 

By their construction as given above, the relations are elements of $\phi(\calO_{K,S}^\times) \leq \bbZ^m$. 

Many people have worked on different approaches to generate relations. 
Here, it is sufficient to know that we have some code that does it; and the longer we run it, the more relations we get.  We may want at least to be sure that we loop over small elements to find small units, as in \Cref{rmk:smallunits}.

In general, one can (and has to) view such code as a black box. First, some of the algorithms are randomized~\cite[section 6.5.2]{Cohen}. Second, if sieving is used to find relations, one might implement the optimizations listed in~\cite[section 10.4.3]{Cohen}. These result in a general speed up,
but the search for relations is no longer exhaustive in a well-defined search range.

\begin{ques}
What if we started by making relations, keeping those that satisfy some criterion, then form the generators based on the desired relations? This is the problem of finding a subset of $r$ rows which are supported on $r$ columns. Is this NP-complete? 
\end{ques}

Using linear algebra, given sufficiently many relations one can:
\begin{itemize}
\item
compute a sublattice of $\ker(\phi)$.
\item
compute a subgroup of the unit group.
\end{itemize}

A parallel version of the relation search was implemented by Allan Steel in 2020--2022. 
It is based on the farmer-worker-model and is turned on once \Magma{} is set to parallel mode via
{\tt SetNthreads}. As this allows to work with larger examples, the relation matrix is now represented
as a sparse matrix. 

\subsection*{When to stop searching?}

Given a method to find relations, we need a mechanism which indicates when to stop searching and proceed with simplifying the presentation.  

For the unit group: if we miss units, the unit group does not have the expected rank or the discriminant
of the unit lattice (regulator) is \emph{larger} than it is supposed to (fewer units means larger covolume) by a positive integer factor.

For the class group: if we miss relations, given that we always take a set of generators we have only a submodule of $\ker(\pi)$. Thus, the class number found is a \emph{multiple} of what it is supposed to be. 

\begin{rmk}
In the heuristic case, the factor base may also be too small and we may not have generators; but that is the point of the heuristic method, by design this is supposed to be very unlikely to happen.
\end{rmk}

In either case, because we are off by a positive integer factor, it would be enough to know the product $\hnum(K) \cdot \Reg(K)$ to a specified precision, so we turn to this now. 

\section{Analytic approaches} \label{sec:analytic}

The analytic class number formula reads:
\[ 
\res_{s = 1} \zeta_K(s) = 
\frac{2^{r_1} (2\pi)^{r_2} hR}{w \sqrt{\abs{d}}}
\] 
where $w=\#O_{K,\textup{tors}}$ is the number of roots of unity in $K$, $h=\hnum(\calO_K)$, and $R=\Reg(\calO_K)$. 

\subsection*{Formalizable evaluation}

We could try to compute the residue by using the general package for working with $L$-series, implemented by Tim Dokchitser~\cite{D04}. The given complexity is $O(\sqrt{\abs{d}})$, which makes it impractical unless the discriminant is small (in which case other methods are still faster). Also, the error terms are not worked out all the way to the end. In principle, this could be made formalizable using ball arithmetic. 

An explanation for this: the Euler product actually converges for the Dedekind zeta function (see below), but for elliptic curves at $s=1$ it does not---you need analytic continuation to push past $\mathrm{Re}(s)>3/2$. 

Currently, there is no formalizable method available in \Magma\ that uses an analytic approach (in particular, we do not have ball arithmetic). However, we still use the heuristic evaluation below to give a \emph{practical} guess as to when to stop looking for relations, as needed at the end of the previous section.  We still have to do something else to certify (see below), and for theoretical purposes we could skip this---but in practice, it gives us a good indication of when we should stop searching. 

\subsection*{Conditional evaluation}

We now explain a conditional, rigorous method. 

For $X>0$, define 
\begin{eqnarray*}
A_K(X) &\colonequals & \sum_{
\frakp \subset \calO_K, N(\frakp^m) < X} 
\frac{\log N(\frakp)}{N(\frakp)^{m/2}} 
\left(\frac{\sqrt{X} \log(X)}{N(\frakp)^{m/2} \log(N(\frakp)^m)} - 1 \right) \\
f_K(X) &\colonequals & \frac{3(A_K(X) - A_\bbQ(X) + A_\bbQ(X/9)- A_K(X/9))}{2 \sqrt{X} \log(3X)}
\end{eqnarray*}

\begin{thm}[Belabas--Friedman, Bach] 
Assuming GRH, for all $X \geq 69$ we have
\[ \abs{\log \zeta_K^*(1) - f_K(X)} \leq \frac{2.324 \log\abs{d}}{\sqrt{X} \log(3X)} C \]
where
\[ C \colonequals 
\left(1 + \frac{3.88}{\log(X / 9)} \right)
\left(1 + \frac{2}{ \sqrt{\log\abs{d}}} \right)^2 +
\left( \frac{4.26 (n-1)}{\sqrt{X} \log \abs{d}} \right). \]
\end{thm}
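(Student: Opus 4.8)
The plan is to recover $\log \zeta_K^*(1)$ from a contour integral of $\log\zeta_K$ against a carefully chosen kernel, to push the contour past the pole at $s=1$ using GRH, and then to estimate what remains on the critical line; this is the method of Bach, later refined by Belabas and Friedman. The organizing observation is that, since $\zeta_\bbQ = \zeta$ has residue $1$ at $s=1$, the difference $\log\zeta_K(s) - \log\zeta(s)$ is holomorphic at $s=1$ with value exactly $\log\zeta_K^*(1)$: subtracting the rational-field contributions throughout is what cancels the singular parts (the $-\log(s-1)$ at $s=1$, and after the shift the archimedean $\Gamma$-factor term), and it is the reason the $A_\bbQ$ terms occur in $f_K$. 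Concretely one fixes a kernel $k(s)$ having a simple pole of residue $1$ at $s=0$ and sufficient decay in vertical strips, writes
\[
\log\zeta_K^*(1) = \frac{1}{2\pi i}\int_{(c)}\bigl(\log\zeta_K(1+s) - \log\zeta(1+s)\bigr)\,k(s)\,ds
\]
for small $c>0$, and shifts the line of integration to $\operatorname{Re}(s) = -\tfrac12$. Under GRH neither $\zeta_K$ nor $\zeta$ vanishes for $\operatorname{Re}(w) > \tfrac12$, so the only contribution gathered in the shift is the residue at $s=0$, which is $\log\zeta_K^*(1)$ itself; equivalently, this identity is the two sides of Weil's explicit formula applied to $\zeta_K/\zeta$.

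First I would expand $\log\zeta_K(1+s) = \sum_{\frakp,\,m\geq 1}\frac{1}{m\,N(\frakp)^{m(1+s)}}$ on the line $\operatorname{Re}(s)=c$ and integrate term by term. Choosing $k$ so that its inverse Mellin transform is a difference of two scaled copies of the weight appearing in $A_K$, one supported on $[0,\log X]$ and one on $[0,\log(X/9)]$, converts the $\operatorname{Re}(s)=c$ integral into $\bigl(A_K(X) - A_K(X/9)\bigr) - \bigl(A_\bbQ(X) - A_\bbQ(X/9)\bigr)$; together with the normalization, this is exactly $f_K(X)$. The two-scale structure (arguments $X$ and $X/9$), the factor $3$, and the normalization by $2\sqrt X\log(3X)$ all come from the precise shape of $k$, which is a smoothing kernel of the kind used by Bach built at the scales $\sqrt X$ and $\sqrt{X/9}$. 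With this identification, the theorem reduces to bounding the shifted integral on $\operatorname{Re}(s)=-\tfrac12$.

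Next I would bound $\frac{1}{2\pi i}\int_{\operatorname{Re}(s)=-1/2}\bigl(\log\zeta_K(1+s) - \log\zeta(1+s)\bigr)k(s)\,ds$. On the critical line, the Hadamard factorization of $\zeta_K$ (equivalently the explicit formula) together with its functional equation --- conductor $\abs{d}$ and a $\Gamma$-factor of ``degree'' $n$ --- yields a bound of the shape
\[
\log\zeta_K(\tfrac12+it) - \log\zeta(\tfrac12+it) = O\!\bigl(\log\abs{d} + (n-1)\log(\abs{t}+2)\bigr),
\]
uniformly under GRH, the $\log\abs{d}$ coming from the density of low-lying zeros and the $(n-1)\log(\abs{t}+2)$ from the degree term in the zero-counting function $N_K$ (the ``$-1$'' being the degree of $\zeta$ that was subtracted off). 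Integrating this estimate against $\abs{k(-\tfrac12+it)}$ produces the leading term $\frac{2.324\log\abs{d}}{\sqrt X\log(3X)}$, while the degree part produces the lower-order correction $\frac{4.26(n-1)}{\sqrt X\log\abs{d}}$; the factors $\bigl(1+\frac{3.88}{\log(X/9)}\bigr)$ and $\bigl(1+\frac{2}{\sqrt{\log\abs{d}}}\bigr)^2$ are the cost of bounding the tails of these integrals and the cross terms honestly, and the hypothesis $X\geq 69$ is precisely what is needed to make the elementary inequalities along the way hold.

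The main obstacle is not the architecture of the proof, which is classical, but the control of constants: one needs a sharp explicit count of zeros of $\zeta_K$ in horizontal strips, sharp explicit bounds for the archimedean ($\Gamma$-factor) terms in the explicit formula, and a kernel $k$ tuned so that the competing error contributions balance and the numerical constants come out as small as stated. This optimization is the real content of \cite{Bach} and of the Belabas--Friedman refinement, and in practice I would import it rather than reconstruct it.
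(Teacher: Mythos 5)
The paper does not prove this theorem: its ``proof'' is a one-line citation to \cite[Theorem 1]{MR3266965}. Your outline is an architecturally faithful reconstruction of what that reference (building on \cite{bach_94}) actually does---a Weil-type explicit formula for $\zeta_K/\zeta$ under GRH, a two-scale smoothing kernel whose Mellin transform produces exactly the weights in $A_K(X)-A_K(X/9)$ (whence the factor $3$ and the normalization $2\sqrt{X}\log(3X)$), and a bound on the contribution of the nontrivial zeros giving the $\log\abs{d}$ main term and the $(n-1)$ correction---and you correctly identify that the real content is the optimization of explicit constants, which you (like the paper) import from the references. So your proposal is consistent with the paper's treatment and with the cited proof. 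One technical imprecision worth flagging: the pointwise estimate $\log\zeta_K(\tfrac12+it)-\log\zeta(\tfrac12+it)=O(\log\abs{d}+(n-1)\log(\abs{t}+2))$ cannot hold as stated, since $\log\zeta_K$ has logarithmic singularities at the critical zeros; in Belabas--Friedman the shifted contour is not bounded pointwise but is converted (via the explicit formula) into a sum over zeros $\rho$, which is then controlled by integrating the kernel against the zero-counting density $\sim\frac{1}{2\pi}\log\bigl(\abs{d}\,(\cdot)^n\bigr)$. Your parenthetical ``equivalently, the explicit formula'' points at the correct route, but any attempt to make the constants explicit must take that route rather than the pointwise one.
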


\begin{proof}
See \cite[Theorem 1]{MR3266965}.
\end{proof}

To estimate the numerical error in the evaluation of 
$\log \res_{s=1} \zeta_K(s)$ we use $\epsilon$ for
the relative error of a single floating point operation.
Further, we will use the inequalities listed in~\cite[¬ß VII.1]{SMC06} 
and~\cite[¬ß VII.28]{SMC06}.

As a first step,  we estimate the sum of absolute values. For $A_K(X)$ this is bounded by
\begin{align*}
&\sum_{
\frakp \subset \calO_K, N(\frakp^m) < X} 
\frac{\log N(\frakp)}{N(\frakp)^{m/2}} 
\left(\frac{\sqrt{X} \log(X)}{N(\frakp)^{m/2} \log(N(\frakp)^m)} \right) \\
\leq \, & [K : \bbQ] \sqrt{X} \log(X)
\left( \sum_{p < X} \frac{1}{p} 
+ \sum_{p, m \geq 2} \frac{1}{p^m}
\right) \\
\leq \, & [K : \bbQ] \sqrt{X} \log(X)
\left( \log \log X + 0.2615 + \frac{1}{2 (\log X )^2}
+ 0.77316 \right)\, .
\end{align*}
To simplify the notation, we set
$$
E(X) := \sqrt{X} \log(X)
\left( \log \log X + 0.2615 + \frac{1}{2 (\log X )^2}
+ 0.77316 \right)\, .
$$
Assuming that the computation of one term in $A_K$
uses $k$ floating point operations, we can bound the sum 
of all errors of all the terms of $A_K(X)$ by
$$
\epsilon k [K:\bbQ] E(X) \, .
$$
As the total sum has less than $[K : \bbQ] X$ terms, the error of summing all the terms in a pairwise summation scheme is bounded by
$$
\epsilon \frac{\log_2([K : \bbQ] X)}{1 - \epsilon \log_2([K : \bbQ] X)} [K : \bbQ] E(X) \, .
$$
This results in an error bound of
$$
\epsilon 2 (2 + \log \log X) ([K : \bbQ] + 1) \frac{3}{2} 
 \left(k + \log_2([K : \bbQ]  X) +  \log_2(X) \right) 
$$
for $\log \res_{s=1} \zeta_K(s)$.

If we use the standard data type {\tt single}, 
one gets $\epsilon = 2^{-24}$. This results in an error bound of 
less than $0.01$ for $X \leq 10^{10}$ and $[K:\bbQ] \leq 100$. 
We use the data type {\tt double}, in which case $\epsilon = 2^{-52}$;
therefore we end up with an error bound of less than $10^{-10}$  
for $X \leq 10^{10}$ and $[K:\bbQ] \leq 100$. 

To obtain a GRH-conditional rigorous algorithm, we compute the regulator to enough precision: estimates are worked out by Biasse--Fieker \cite[Lemma 4.4]{BF}. 

The current \Magma\ implementation for the GRH-conditional unit group
and class group with proof level {\tt GRH} rely on the correctness of the
residue. However, setting only the bound for the class group generators to {\tt GRH} 
will call the check of saturation and gives therefore a GRH-conditional formalizable
result.

\subsection*{Heuristic evaluation}

We can also just evaluate the Euler product ``directly''.

\begin{thm} 
The infinite product 
$$
\zeta_K^*(1) \colonequals \res_{s=1} \zeta_K(s) =
\lim_{s \rightarrow 1} \frac{\zeta_K(s)}{\zeta(s)}
= \prod_p \frac{\zeta_{K,p}(1)}{\zeta_p(1)} = 
\prod_{p} \frac{1 - 1/p}{\prod_{\frakp \mid p} (1-1/\Nm(\frakp))} 
$$ 
over primes $p$ converges. 
\end{thm}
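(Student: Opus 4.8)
The plan is to reduce the statement to two standard facts about the Dedekind zeta function: that $\zeta_K(s)/\zeta(s)$ is holomorphic and nonvanishing at $s = 1$, and a Mertens-type prime-ideal sum estimate for $\calO_K$. Since $\zeta_K$ and $\zeta$ each have a simple pole at $s=1$ and $\res_{s=1}\zeta(s) = 1$, the quotient $\zeta_K(s)/\zeta(s)$ extends holomorphically across $s=1$ with value $\res_{s=1}\zeta_K(s) = \zeta_K^*(1) > 0$; this accounts for the first two displayed equalities. For $\mathrm{Re}(s) > 1$ both Euler products converge absolutely, so $\zeta_K(s)/\zeta(s) = \prod_p L_p(s)$ with $L_p(s) \colonequals (1 - p^{-s})/\prod_{\frakp\mid p}(1 - \Nm(\frakp)^{-s})$, which accounts for the remaining ones. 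So what actually requires proof is that $\prod_p L_p(1)$ converges and that its value is $\zeta_K^*(1)$.

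For convergence, I would take logarithms and expand the geometric series. Writing $n \colonequals [K:\Q]$ and letting $g_1(p)$ denote the number of primes of residue degree $1$ above $p$, one gets $\log L_p(1) = (g_1(p) - 1)/p + s_p + r_p$, where $s_p \colonequals \sum_{\frakp \mid p,\ f(\frakp) \ge 2} \Nm(\frakp)^{-1}$ collects the primes of degree at least $2$ and $r_p \colonequals \sum_{m \ge 2} \frac{1}{m}(\sum_{\frakp \mid p} \Nm(\frakp)^{-m} - p^{-m})$ collects the higher prime powers. Since $\Nm(\frakp) \ge p$ and at most $n$ primes lie over $p$, we have $0 \le s_p \le n p^{-2}$ and $\abs{r_p} \le (n+1) p^{-2}$, so $\sum_p s_p$ and $\sum_p r_p$ converge absolutely. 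The convergence of $\prod_p L_p(1)$ thus reduces to showing that $\sum_{p \le X} (g_1(p) - 1)/p$ has a finite limit as $X \to \infty$.

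To treat that sum, observe that $\sum_{p \le X} g_1(p)/p$ equals $\sum_{\Nm(\frakp) \le X,\ f(\frakp) = 1} \Nm(\frakp)^{-1}$, which differs from $\sum_{\Nm(\frakp) \le X} \Nm(\frakp)^{-1}$ only by the absolutely convergent contribution of the primes of degree $\ge 2$. By Mertens' theorem for the number field $K$ --- a standard consequence of $\zeta_K$ having a simple pole at $s = 1$ and no zero on $\mathrm{Re}(s) = 1$ --- we have $\sum_{\Nm(\frakp) \le X} \Nm(\frakp)^{-1} = \log\log X + B_K + o(1)$ for a constant $B_K$, and subtracting the classical Mertens estimate $\sum_{p \le X} p^{-1} = \log\log X + M + o(1)$ leaves $\sum_{p \le X} (g_1(p) - 1)/p$ converging to a finite constant. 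Together with the previous paragraph, this shows $\sum_p \log L_p(1)$ converges, hence so does $\prod_p L_p(1)$. To identify the value, I would use that $\sum_p \log L_p(s) = \log(\zeta_K(s)/\zeta(s)) \to \log \zeta_K^*(1)$ as $s \to 1^+$ and that the estimates above hold uniformly for real $s$ in a neighborhood of $1$ (the bounds on the higher-power and higher-degree contributions only improve for $s \ge 1$, and the Mertens estimate can be taken uniform there), so that the tail $\sum_{p > N} \log L_p(s)$ is uniformly small; an Abel-summation / dominated-convergence argument then lets one interchange $\lim_{s \to 1^+}$ with $\sum_p$ and conclude $\prod_p L_p(1) = \zeta_K^*(1)$.

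The hard part is the conditional convergence of $\sum_p (g_1(p) - 1)/p$: the sums $\sum_{\Nm(\frakp) \le X} \Nm(\frakp)^{-1}$ and $\sum_{p \le X} p^{-1}$ each diverge like $\log\log X$, and only once this common leading term cancels is a convergent series left. The genuine input is thus the number-field Mertens estimate --- equivalently, sufficient analytic continuation of $\log \zeta_K$ past $s = 1$ together with a Tauberian step --- while everything else is an absolutely convergent tail bound plus the interchange-of-limits bookkeeping.
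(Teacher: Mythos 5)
Your proof is correct and rests on the same key input as the paper's: a Mertens-type estimate $\sum_{\Nm(\frakp)\le X}\Nm(\frakp)^{-1}=\log\log X+B_K+o(1)$ for the prime ideals of $K$, played off against the classical Mertens theorem so that the divergent $\log\log X$ terms cancel. The paper simply cites the explicit unconditional bounds of Garcia--Lee \cite[Theorem 1]{Garcia} for this, whereas you derive the estimate from the simple pole and nonvanishing of $\zeta_K$ on $\mathrm{Re}(s)=1$ and carry out the (correct) reduction and tail bounds yourself.
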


\begin{proof}
The explicit bounds given in~\cite[Theorem 1]{Garcia} imply the convergence
for any number field $K$. 
\end{proof}

\begin{example}
We get
$$
\left|\res_{s=1} \zeta_K(s) - 
\prod_{p < X} \frac{1 - 1/p}{\prod_{\frakp \mid p} 1 - 1/N \frakp} \right| < 0.1461
$$
for $X = \exp(10^{34})$ and $K = \bbQ(i)$. 
\end{example}

\begin{rmk}
For the approximation
$$
A(X) := \prod_{p <X} \frac{1-1/p}{\prod_{\frakp \mid p, N\frakp < X} 1 - 1 /N \frakp},
$$
the GRH based estimate
$$
| \log(\res_{s=1} \zeta_K(s)) - A(X)| < \frac{2 \log |d| + (0.928 n + 0.754) \log(X)}{\sqrt{X}}
\mbox{ for } X > 1000
$$
was worked out in~\cite[Thm.~2, Tab.~2]{bach_94}. 
\end{rmk}

Magma (V2.28) computes the finite product 
\[ \res_{s=1} \zeta_K(s) \approx \prod_{p < 1000} \frac{\zeta_{K,p}(1)}{\zeta_p(1)}. \]
If this does not match the class and unit group found with an error of 5\%, the bound is doubled, and this is repeated recursively. 
A test example with extra verbose printing is
\begin{verbatim}
> _<x> := PolynomialRing(Rationals());
> SetKantVerbose("CLASS_GROUP_CHECK",1);
> f := t^16 - 36*t^14 + 488*t^12 - 3186*t^10 + 10920*t^8 
 - 19804*t^6 + 17801*t^4 - 6264*t^2 + 64;
> OK := MaximalOrder(f);
> ClassGroup(ord : Proof := "GRH");
Euler-Product bound: 1000 
Euler-Product bound: 2000 
Abelian Group of order 1
\end{verbatim}
(Alas, \texttt{SetKantVerbose} only works on the development version.) Thus, the increase in factors works. 
Note that to \emph{ensure} 5\% error using the above GRH-based estimate would require a bound of $10^6$ even in small examples.

The above example is the most extreme one within more than 1000 polynomials. Here, the relative error of the residue with the prime bound 1000 found is below 7.5\%. If we consider the partial evaluation 
\[ \frac{\prod_{p<1000} \zeta_{K,p}(1)/\zeta_p(1)}{\res_{s=1} \zeta_K(s)} \]
assuming GRH, we find in this experiment:
\begin{itemize}
\item mean value $1.012$, 
\item standard deviation $0.015$, 
\item skewness $-0.0166$, and 
\item kurtosis $3.114$. 
\end{itemize}
Because of the 5\% error heuristic, the error has to be about 30 standard deviations to cause trouble. 

\begin{example}
We picked the field given by the randomly chosen polynomial
$x^8 + 9127 x^3 + 9127 x^2 + 9127 x + 18254$ for a closer inspection. 
It results in a maximal order with a 45-digit discriminant. 
We applied both methods described above to approximate the residue. 
The plot below shows the resulting approximation errors of the logarithm 
of the residue and the error
estimates as a function 
of the used prime bound. 

\begin{center}
\includegraphics[scale=0.65]{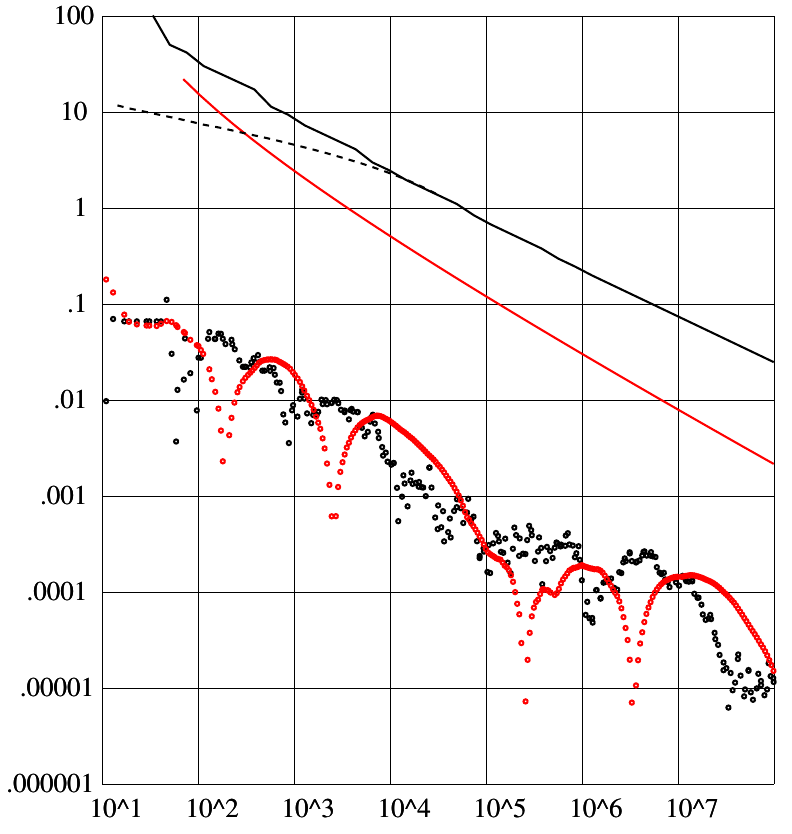}
\end{center}

The color black is used for the Euler product approach and Bach's estimate. The red color is used for the weighted sum estimate and the Belabas-Friedman error bound.
Each dot shows the maximal error that occures in the interval that it covers whereas the solid lines show the bounds. 
The dashed line shows a naive improvement of Bach's bound for small $p$. 
We can read off the plot that in practice both methods give about the same error, whereas the error estimates show a much bigger difference.
Note that the downward spikes in the red dots indicate a change of sign in the error, whereas the error of the Euler product oscillates at a much higher frequence. Therefore, the changes of sign are not visible in his type of plot.

The next plot compares the two errors
with the standard deviation of random
Euler products.

\begin{center}
\includegraphics[scale=0.65]{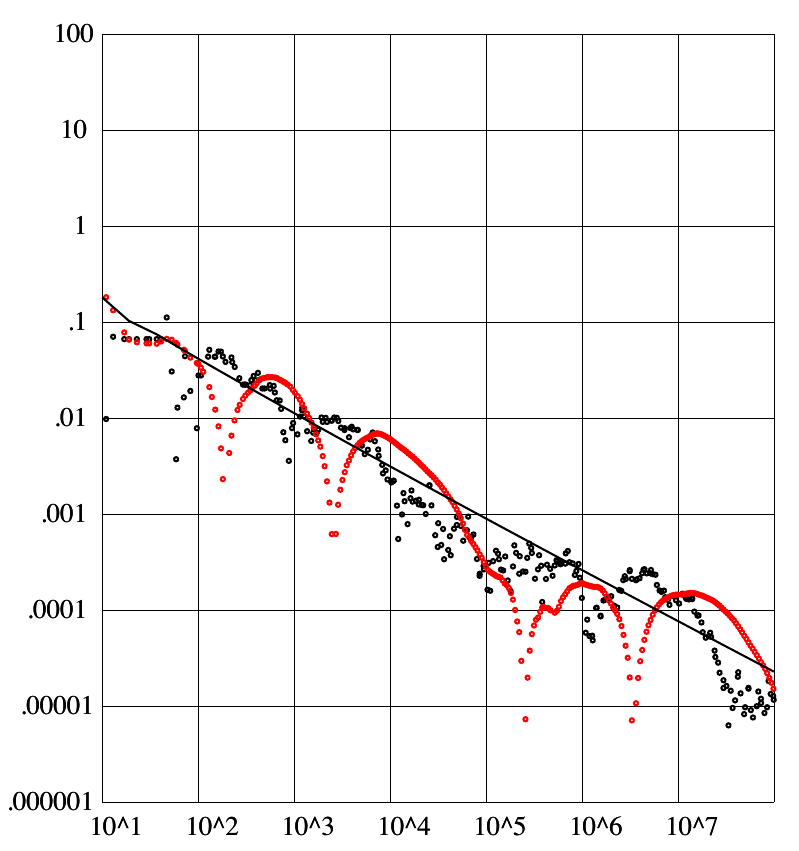}
\end{center}

The plot indicates that, a stochastic model gives an accurate representation of the errors. 
The article~\cite{GS03} may be interesting for a more theoretic perspective on this.
\end{example}

\begin{ques}
Does this hold up for imaginary quadratic fields of large discriminant? How should $1000$ actually scale with the parameters (degree, discriminant)?
%
%
\end{ques}

\begin{rmk}
The GRH is a perfectly good heuristic, and some helpful GRH estimates are given by Duke \cite[Proposition 5]{Duke}. For example, we could add the assumption that the ratio $\zeta_K(s)/\zeta(s)$ is an entire Artin $L$-function---is this an additional assumption we should add to the hierarchy from the introduction? 
\end{rmk}

In any case, this finishes the description of a heuristic algorithm for computing the class group and unit group. 

\section{Unit group algorithms} \label{sec:unitgroup}

We now turn to the formalizable, rigorous, conditional, and heuristic algorithms focused on the unit group. 

\subsection*{Quadratic fields} 

Imaginary quadratic fields have a finite unit group.  The unit group of a real quadratic field can be determined by the continued fraction algorithm in quasi-linear time in the size of the output (which can be exponential in the input size). Another algorithm to compute a unit is to use Stark's method and evaluate the $L$-series. 

\begin{ques}
A quasi linear version of the continued fraction algorithm is implemented in magma
{\tt /package/Ring/FldQuad/fund\_unit.m}.
When not using a power product representation of units, this approach compares reasonably favorably in speed
with the factor base approach. 
\end{ques}

\subsection*{Missing units} 

At this stage in the algorithm, we may suppose we have the roots of unity and (by continuing to generate relations) that we have a set of multiplicatively-independent units of rank equal to $r_1+r_2-1$. 

Then any missing unit would have an $n$th power in our given subgroup for some $n\geq 2$, so we need to \emph{saturate} the lattice of units. Our first task is to give an upper bound for $n$. 

Any unit outside the subgroup generated would have to have bounded logarithmic height (thinking of the lattice in the log Minkowski embedding), hence bounded exponential height (by the arithmetic-geometric mean inequality):
\[ \log \Nm(x) \leq \log \prod_i (\abs{x}_i)^2 = (\sum_i \log \abs{x}_i)^2 \leq \sum_i (\log \abs{x}_i)^2 \leq \lambda_1(\log \calO_K^\times). \]
An approach \emph{not} taken by \Magma\ is to enumerate all elements up to that height. 

\begin{rmk} \label{rmk:smallunits}
In principle, this could be turned into a rigorous algorithm, at least once we have found (the roots of unity and) a set of linearly independent units. 
\end{rmk}

Instead, we bound $n$ as follows. We enumerate all elements that have at most twice the length of the last entry in the LLL bases of the given order.
This gives lower bounds for the logarithmic norm of generators of the unit group. By Minkowski's
inequalities this gives a lower bound on the regulator.
As this function does an enumeration of small elements, it also determines all the roots of unity contained in the maximal order.

Using this, the index is bounded from above by the quotient 
$$
\frac{\mbox{regulator of known subgroup}}{\mbox{lower bound of regulator}}\, .
$$
The check for saturation is performed for all prime indices up to
this bound.

\subsection*{Saturating the unit group}

We can now ensure that $\log \calO_K^\times$ is $\ell$-saturated for all primes $\ell \leq n$, with $n$ computed as in the previous subsection. 

In other words, given a prime $\ell$ we have to show that no $\ell$th root of a known unit gives a new unit. 

A possible approach, not taken by \Magma, is to just check if there is an $\ell$th root running over all possible elements in the lattice modulo $\ell$th powers. This will be expensive! 

Instead, we use the following reduction modulo $\frakp$ approach.
\begin{itemize}
\item
Take sufficiently many (say $m$) prime ideals $\frakp_i$ with $\ell \mid k_i^\times$.
Here, we set $k_i \colonequals (\calO / \frakp_i)$. 
\item
An $\ell$-th power of $\calO^\times$ reduces to an $\ell$-th power in $k_i^\times$.
\item
A $\ell$-th power is contained in an index $\ell^m$ subgroup of $\prod k_i^\times$.
\end{itemize}

This gives an algorithm, sketched as follows.
\begin{enumerate}
\item
Use discrete logarithms to map $k_i^\times / (k_i^\times)^\ell$ to an additive group.
\item
Use linear algebra in $(\bbZ / \ell \bbZ)^m$ to determine all the known units that 
reduce to the index $\ell^m$ subgroup.
\item
If a unit that is not an $\ell$-th power reduces to the index $\ell^m$ subgroup,
redo the above computation with a larger value of $m$ or terminate with an 
error message.
\item Return that the unit group is $\ell$-saturated.
\end{enumerate}
All the above is implemented in KANT and called by \Magma.

\begin{rmk}
Currently, the discrete logarithm in $k_i^\times / (k_i^\times)^\ell$ is 
computed via the discrete logarithm of $k_i^\times$. To ensure that this does not
get too hard, all the prime ideals $\frakp_i$ used are chosen to be degree one 
primes.

In the range this is used, Pohlig--Hellman reduction in combination with
a random walk algorithm for the discrete logarithm is a good choice. As this works by reducing to quotients of prime order, the 
code could be optimised, by computing only the discrete logarithm in the order $\ell$
quotient.

Generally, \Magma\ caches the prime ideals, but for the discrete logarithm no 
caching is visible in the code. 
\end{rmk}


\begin{ques}
The Chebotarev density theorem is the heuristic for why this works. What does an optimistic version of this predict for the number of primes required? 
\end{ques}






\section{Finishing the class group}  \label{sec:finishing} 
 
In a similar way, we can saturate the class group.
More precisely, we need to detect the following two problems.  
\begin{itemize}
\item \textbf{Missing generators}.  If we miss a generator, then a prime ideal below the Minkowski bound gives a new ideal class.
\item \textbf{Missing relations}.  A missing relation results in a class group larger than correct. Further, the
decomposition of ideals into classes found is finer than the correct decomposition into ideal classes. 
As the result has a group structure, it will have elements of prime order 
that consist of principal ideals.
\end{itemize}

We do not have to take generators to start with, and this is the approach taken by \Magma\ in the formalizable approach: we start with some plausible set of generators, then after we have relations we check that all of the remaining primes up to the Minkowski bound are covered by the known classes. 
This improves the speed in the linear algebra, because we can get organized in the class group doing linear algebra with smaller matrices. 

In the conditional approach, we are guaranteed a (reasonably small) set of generators. (In a possible heuristic approach, we do not care.) \Magma\ previously incorrectly used a heuristic Euler product evaluation, but now we use a rigorous one and we can stop (in proof level {\tt GRH}) once we pass the analytic check explained in the previous section. 
If only the bounds for the generators of the class group are set to {\tt GRH}, a check for saturation 
of the class group is done.

\subsection*{Saturating the class group}

What remains then is a formalizable algorithm for the class group. We again do this with saturation. 
The basic idea is as follows: if $\fraka$ is given and $\fraka^n = (a)$ is known to be principal, then we have
\begin{itemize}
\item
either $\fraka$ is not principal, or
\item
there exists $u \in \calO_K^\times$ such that $u a \in K^{\times n}$ is an $n$th power (in $\calO_K$). (In this instance, an $n$-th root will generate $\fraka$.)
\end{itemize}
As usual, we reduce to the case where $n=\ell$ is prime (by raising to the power $n/\ell$). We compute a GRH-conditional unit group. The implemented check will confirm that
the class group and the unit group are in fact $\ell$-saturated or terminate with an error message. 

We have to saturate the entire class group and not just a cyclic subgroup. Therefore, the approach is a bit more elaborate.

We use the notation $\Cl(\calO_K) \cong C_1 \times \cdots \times C_k$ to decompose the class group into a product of cyclic groups. For each factor $C_i$ we chose an ideal $\fraka_i$ representing a generator. Now, the function {\tt ClassGroupCyclicFactorGenerators} returns a generator $g_i$ for each principal ideal $\fraka_i^{\#C_i}$. Further, we denote by $u_0,\ldots,u_r$ generators of the unit group. 
We assume that $u_0$ is the only generator of finite order.

Assuming the prime $\ell$ divides $\#C_1,\ldots,\#C_j$ then the class and the unit group 
are $\ell$-saturated if and only if the subgroup $\langle u_0,\dots,u_r,g_1,\dots,g_j \rangle$ does not contain an $\ell$th power aside from $1$.  If $\ell \nmid \ord(u_0)$, then we can remove it from the subgroup.  
%
The search for $\ell$-th powers is done in the
same way as the saturation of the unit group by
a reduction modulo $\frakp$ approach and linear
algebra over $\bbZ / \ell \bbZ$.

\begin{rmk}
The current C-level implementation of the class group saturation is switched off and it is no longer compatible with the current C code.
Therefore, this part is reimplemented in package code. 

As the above computation can be done modulo $h$-th powers, this allows to replace {\tt ClassGroupCyclicFactorGenerators} by a more efficient function that gives the results of the
linear algebra steps only modulo $h$ (resp. the final result modulo $h$-th powers). Further, it is not necessary to be compatible with the class group map, as it is not used in the saturation algorithm. As the computation is done by reduction modulo $\frakp$, all elements are represented as power products and multiplied out after reduction modulo $\frakp$.
\end{rmk}

Even in the case of imaginary quadratic fields with large discriminants, the time spent on discrete logarithms is negligible. Other types of fields result in much smaller class groups and the discrete logarithms are trivial to compute. The only step whose computation time is not negligible is the computation of the Smith form (with transformation matrix) of the 
relation matrix.


%
%

The above saturation of the class group is implemented in KANT, but unfortunately, it was switched off. 
As of V2.29, a new implementation is available.




\section{Conclusions}  \label{sec:conclude}

\subsection*{Summary} 

To summarize, as of V2.29, the algorithms run as follows.
\begin{itemize}
\item 
To compute a formalizable class group, we use a heuristic set of generators and a heuristic evaluation of the Euler product, compute a set of relations until the analytic class number matches, and then (using exact methods) saturate the class group. Finally it is checked that the heuristically chosen set of generators is in fact a generating set for the class group.
\item 
To compute a rigorous unit group, we saturate using the regulator of the known subgroup.  (This could be made formalizable with ball arithmetic.)  
\item 
We have a GRH-conditional formalizable algorithm for the class group, but again only GRH-conditional rigorous algorithm for the unit group.  For a GRH-conditional rigorous algorithm, we use the GRH-bound, evaluate the analytic class number, and declare we are done with relations when we match.

\item We don't yet have a fast heuristic algorithm.
\end{itemize}

The computation of $\res_{s=1} \zeta_K(s)$ and error estimates are implemented in C.  
The saturation proof for the class group is implemented in package level.  
The description of the proof levels {\tt GRH, UserBound, Subgroup, Full} in the handbook has been updated.
By an early abort of the relation search (before the Euler product matches), we can test the saturation algorithms.

\subsection*{Benchmarks}
The table below shows the timinings for the steps performed.
\medskip

\begin{center}
\begin{tabular}{c||c|c|c|c}
               & max order   &                &                   &  \\ 
Polynomial     & Euler prod & $\Cl(\calO_K)$ &  $\calO_K^\times$ & $\Cl(\calO_K)$   \\
               & residue    &                &                   & saturation  \\
\hline\hline
\rule{0pt}{2.6ex} $x^2 + 3$  & 0.000 & 0.030 & 0.000 & 0.000 \\ 
$x^2 + 10^{30} + 57$  & 0.010 & 2.090 & 0.000 & 5.620 \\ 
$x^2 - 10^{30} - 57$  & 0.010 & 0.960 & 2.010 & 0.000 \\ 
$x^3 - 17$  & 0.010 & 0.020 & 0.000 & 0.000 \\ 
$x^3 - x + 2015993900449$  & 0.010 & 0.450 & 1.800 & 0.200 \\ 
$x^6 + 4x^3 + 30x^2 + 4$  & 0.020 & 0.070 & 0.010 & 0.010 \\ 
$x^8 + 4346x^4 + 169$  & 0.050 & 0.370 & 0.280 & 0.070 \\ 
$x^8 + 8932x^4 + 17161$  & 0.050 & 0.640 & 1.650 & 0.120 \\ 
$x^{12} + 4x^6 + 270x^4 + 4$  & 0.080 & 0.640 & 0.130 & 0.110 \\ 
$x^{12} + 422x^6 + 5070x^4 + 44521$  & 0.130 & 3.410 & 18.820 & 0.720 \\ 
\end{tabular}
\end{center}
\medskip

The time to saturate the unit group is proprtional to the upper index bound and the time 
to fully check the class group is proportional to the Minkowski bound. 
Thus, both is only possible in small examples.
\medskip

\begin{center}
\begin{tabular}{c||c|c|c}
Polynomial     & Conditional & Verify         & $\calO_K^\times$    \\
               & results    &  $\Cl(\calO_K)$& saturation  \\
\hline\hline
\rule{0pt}{2.6ex} $ x^2 - 13 $ & 0.010 & 0.000 & 0.000 \\ 
$ x^2 - 113 $ & 0.030 & 0.000 & 0.000 \\ 
$ x^2 - 1013 $ & 0.010 & 0.000 & 0.000 \\ 
$ x^2 - 10103 $ & 0.010 & 0.000 & 0.000 \\ 
$ x^2 - 10^5-103 $ & 0.020 & 0.000 & 0.000 \\ 
$ x^2 - 10^6-1003 $ & 0.030 & 0.000 & 0.000 \\ 
$ x^2 - 10^7-1009 $ & 0.030 & 0.000 & 0.010 \\ 
$ x^2 - 10^8 -10017 $ & 0.030 & 0.020 & 0.020 \\ 
$ x^2 - 10^9-10029 $ & 0.040 & 0.050 & 0.070 \\ 
$ x^2 - 10^{10}-100003 $ & 0.090 & 0.850 & 0.280 \\ 
$ x^2 - 10^{11}-100009 $ & 0.080 & 1.270 & 0.560 \\ 
$ x^2 - 10^{12}-1000021 $ & 0.180 & 10.190 & 2.620 \\ 
$ x^2 - 10^{13}-1000029 $ & 0.450 & 81.710 & 8.580 \\ 
$ x^2 - 10^{14}-10000003 $ & 1.990 & --- & 89.170 
\end{tabular}
\end{center}

\medskip
The next larger example was terminated when using more than 20 GB of memory.

\subsection*{Testing}
Among other test, the following was performed with repeated runs of more than 10000 number fields. The relation search was terminated early, once the relation matrix and the unit group reached full rank, but  without a match in the analytic class number formula. 

The resulting class groups and unit groups were compared with GRH-conditional results. In case the results did not coincide, a rigorous computation was requestet. This called the proof algorithms, as the result was viewed as conditional. The test found:
\begin{itemize}
\item
Corrected indices of unit groups (by a search for small units) range from 2 to 648 and include various primes such as 2, 3, 7, 11, 13, 17, 23.
\item
Detected indices of unit groups (resulting in error messages) range from 2 to 648 (including 101 and other odd primes).
\item
Detected indices of class groups (resulting in error messages) range from 2 to 24. Prime factors other than 2 and 3 did not show up.
\end{itemize}
In total an incorrect class group occurred about 4000 times, an incorrect unit group occurred about 9000.
For further improvements of the algorithm we can learn from this test that a full rank relation matrix will give the correct class group with higher probability than the unit group. An incorrect class group will most likely be off by a factor of the shape $2^i 3^j$.
Further, the unit group derived from an incomplete relation matrix can have large index in the full unit group. In case the unit group is incorrect, the index is 2 with a probability of about $2/3$.

\subsection*{Possible future work} 

\begin{enumerate}
\item The algorithm to generate relations used in the class group computation would be useful in other contexts. Could we have access to these functions at the user level, perhaps with some parameters to control the factor base and how many relations to generate?  This would be potentially useful in Diophantine contexts, e.g. in the Brauer--Manin obstruction. For this application it is not needed that the kernel of the relation matrix gives the whole unit group. 
More precisely, we would like to find elements whose norm is an $\ell$th power, without necessarily completeness results; relation finding code provides an important input for that.
\item It would be useful to be able to work with unexpanded power products for ideals and units as a well-defined type.
\item For abelian fields, can we do better in class group computations: for example, computing class numbers using Bernoulli numbers?
\item Is there any potential speedup to compute just $(\Cl \calO_K)/(\Cl \calO_K)^{\ell}$?  In principle, the linear algebra is faster (no awfulness with Smith form), if we have generators we only need to do $\ell$-saturation which is easier.  The case $\ell=2$ shows up frequently in applications.
\item Are there other heuristics or computational observations that would lead to faster heuristic algorithms?  Sometimes one just wants to know the answer as quickly as possible without rigor, then after the experimental phase is over certifying it.
\end{enumerate}

\end{document}